\newtheorem{theorem}{Theorem}
\newtheorem{lemma}[theorem]{Lemma}
\newtheorem{corollary}[theorem]{Corollary}
\newcommand{\la}{\lambda}
\begin{document}

\title[Multiplicatively spectrum-preserving maps]{
Multiplicatively spectrum-preserving maps on $C^{*}$-algebras 
}
\author{
Michiya Mori
}
\address{
Graduate School of Mathematical Sciences, The University of Tokyo, 3-8-1 Komaba, Meguro-ku, Tokyo, 153-8914, Japan; Interdisciplinary Theoretical and Mathematical Sciences Program (iTHEMS), RIKEN, 2-1 Hirosawa, Wako, Saitama, 351-0198, Japan.
}
\email{mmori@ms.u-tokyo.ac.jp
}

\author{
Shiho Oi
}
\address{
Department of Mathematics, Faculty of Science, 
Niigata University, Niigata 950-2181, Japan.
}
\email{shiho-oi@math.sc.niigata-u.ac.jp
}

\keywords{$C^*$-algebras, spectra, Jordan $*$-isomorphisms, preserver problems}
\subjclass[2020]{46L05, 47A10, 47B49}

\maketitle

\begin{abstract}
We study surjective maps between the sets of all self-adjoint elements of unital $C^*$-algebras which satisfy the multiplicatively spectrum-preserving property. We show that such maps are characterized by Jordan isomorphisms and central symmetries. This is an answer to a problem posed by Moln\'ar.
\end{abstract}




\section{Introduction}
Let $A$ and $B$ be unital $C^{*}$-algebras and let $\sigma(\cdot)$ denote the spectrum of elements. 
For any $a,b \in A$, we define the Jordan product $\circ$ by $a \circ b=\frac{1}{2}(ab+ba)$. We denote the set of all self-adjoint elements of $A$ (resp.\ $B$) by $A_{sa}$ (resp.\ $B_{sa}$). As we have $a \circ b \in A_{sa}$ for any $a,b \in A_{sa}$, 
the self-adjoint part $A_{sa}$ of any $C^{*}$-algebra $A$ is a Jordan algebra, which is a real commutative (non-associative) algebra whose product $\circ$ satisfies $a \circ (b\circ a^2)=(a \circ b)\circ a^2$.  We call a real-linear map $J\colon A_{sa} \to B_{sa}$ a Jordan homomorphism if $J(a\circ b)=J(a) \circ J(b)$ holds for any $a,b \in A_{sa}$. Moreover, if $J\colon A_{sa} \to B_{sa}$ is bijective, then it is called a Jordan isomorphism. For any Jordan homomorphism $J\colon A_{sa} \to B_{sa}$, one may uniquely obtain the complexification of $J$, which is a complex-linear map $J_{\mathbb{C}}$ from $A$ into $B$ that satisfies $J=J_{\mathbb{C}}$ on $A_{sa}$.
Then $J_{\mathbb{C}}$ is a Jordan $*$-isomorphism, that is, it satisfies $J_{\mathbb{C}}(a \circ b)=J_{\mathbb{C}}(a) \circ J_{\mathbb{C}}(b)$, and $J_{\mathbb{C}}(a^{*})=J_{\mathbb{C}}(a)^{*}$ for any $a,b \in A$. 
Jordan $*$-isomorphisms appear ubiquitously when studying mappings between $C^{*}$-algebras preserving a certain structure. See for example \cite{molnar2}.
On the other hand, the quantum-mechanical observables are expected to be self-adjoint in quantum theory. From this perspective, studying Jordan isomorphisms between the self-adjoint parts of $C^{*}$-algebras is equally interesting. 

St\o rmer \cite[Theorem 3.3]{stormer} showed that any Jordan $*$-homomorphism between two unital $C^{*}$-algebras is the sum of a $*$-homomorphism and a $*$-anti-homomorphism. This implies that any Jordan isomorphism $J\colon A_{sa} \to B_{sa}$ satisfies
\begin{equation}\label{jajb}
\sigma(J(a)J(b))=\sigma(ab), \quad a,b \in A_{sa}.
\end{equation}
For a detailed proof, see the first part of the proof of Theorem 2.2 in \cite{molnar}.
We call \eqref{jajb} the multiplicatively spectrum-preserving property of a Jordan isomorphism. 

Moln\'ar initiated the study of the multiplicatively spectrum-preserving property in \cite{molnar0}. In \cite{molnar},  he considered whether a surjection (not assumed to be linear)  between self-adjoint parts of unital $C^{*}$-algebras which has the multiplicatively spectrum-preserving property induces a Jordan isomorphism or not. 
He obtained a positive solution in the contexts of von Neumann algebras (\cite[Theorem 2.2]{molnar}) and standard $C^{*}$-algebras (\cite[Theorem 2.5]{molnar}). 
Indeed, in order to prove the theorem \cite[Theorem 2.2]{molnar}, he applied the lemmas \cite[Lemma 2.3]{molnar} (\cite[Lemma 22]{molnar2}) and \cite[Lemma 2.4]{molnar} (\cite[Lemma 23]{molnar2}), which provide spectral characterizations of elements. However, he noted that the proofs of the lemmas were valid for only von Neumann algebras. Consequently, he posed a problem of whether the theorem \cite[Theorem 2.2]{molnar} can survive for general $C^*$-algebras (see \cite[p.194]{molnar}, \cite[Problem 4]{molnar2}). 
It can be seen from \cite[p.35]{molnar2} that the answer is yes if
one may obtain extensions of the lemmas \cite[Lemma 2.3]{molnar} (\cite[Lemma 22]{molnar2}) and \cite[Lemma 2.4]{molnar} (\cite[Lemma 23]{molnar2}) to general $C^{*}$-algebras. The purpose of this paper is to actually carry it out in Lemma \ref{lemma2} and Lemma \ref{spectrum} below. This gives a positive answer to the problem, which reads as follows.
Recall that a self-adjoint unitary is called a symmetry.

\begin{theorem}\label{main}
Let $A$ and $B$ be unital $C^{*}$-algebras. Suppose that a surjective map $\phi\colon A_{sa} \to B_{sa}$ satisfies 
\[
\sigma(\phi(a)\phi(b))=\sigma(ab),\ \ a,b \in A_{sa}.
\]  Then there is a central symmetry $s \in B_{sa}$ and a Jordan isomorphism $J\colon A_{sa} \to B_{sa}$ such that 
\[
\phi(a)=sJ(a), \quad a \in A_{sa}.
\]
\end{theorem}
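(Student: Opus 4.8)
The plan is to follow Molnár's strategy, using the property $\sigma(\phi(a)\phi(b))=\sigma(ab)$ to extract algebraic information purely from spectral data, and then invoke the two spectral lemmas (Lemma \ref{lemma2} and Lemma \ref{spectrum}) that the paper promises to establish for general $C^*$-algebras. First I would put $b=a$ to get $\sigma(\phi(a)^2)=\sigma(a^2)$, and test against the identity: since $\phi$ is surjective, there is some $e$ with $\phi(e)=1_B$, and then $\sigma(\phi(a))=\sigma(\phi(a)\cdot 1_B)=\sigma(\phi(a)\phi(e))=\sigma(ae)$. The first substantive step is to pin down this element $e$; I expect that testing the relation $\sigma(\phi(a)\phi(e))=\sigma(ae)$ against spectral characterizations forces $e^2=1_A$, i.e.\ $e$ is a symmetry, and moreover that $e$ must be \emph{central} (commute with everything), so that replacing $\phi$ by $a\mapsto \phi(e)\phi(a)$ (or absorbing $s:=\phi(e)$ appropriately) reduces us to the \emph{unital} case $\phi(1_A)=1_B$ with $\sigma(\phi(a))=\sigma(a)$ for all $a$.

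Once the map is normalized to be unital and spectrum-preserving, the heart of the argument is to recover the Jordan structure from spectra alone. Here is where Lemma \ref{lemma2} and Lemma \ref{spectrum} do the real work: they are meant to characterize, in $C^*$-algebraic terms, relations such as "$a$ and $b$ commute", "$a$ is a projection", "$a \le b$", or the value of $a\circ b$, entirely through conditions on spectra of products $\sigma(ac)$, $\sigma(bc)$ ranging over test elements $c$. Using these, I would argue that $\phi$ preserves the relevant order-theoretic and product-theoretic structure: it sends projections to projections, preserves orthogonality and the natural order, and intertwines the Jordan product in the sense that $\sigma(\phi(a)\phi(b)+\phi(b)\phi(a))$ is determined the right way. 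The standard mechanism is to deduce first that $\phi$ is additive on commuting elements (via functional calculus on the abelian $C^*$-subalgebra generated by $a$ and $1$, where spectrum-preservation is very rigid), then that $\phi$ respects squares and hence the Jordan product, using the polarization identity $a\circ b=\frac14\big((a+b)^2-(a-b)^2\big)$ together with the already-established additivity.

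The final step is to assemble these pieces into a genuine real-linear Jordan isomorphism $J$ and verify surjectivity. Having shown $\phi$ preserves squares and is additive on a sufficiently large commuting family, I would promote this to full real-linearity: homogeneity over rationals follows from additivity, and continuity (which itself follows from spectrum-preservation, since $\|\phi(a)\|=\|a\|$ for self-adjoint elements, as the spectral radius equals the norm) upgrades this to real-homogeneity. This yields a real-linear, unital, square-preserving, hence Jordan, bijection $J$; then $\phi=sJ$ with $s$ the central symmetry isolated in the first step. I expect the main obstacle to be precisely the first and second steps: first, showing that the "pre-image of the identity" $e$ is forced to be a central symmetry (Molnár's original argument for this used the von Neumann algebra lemmas), and second, that the new general-$C^*$-algebra versions of Lemma \ref{lemma2} and Lemma \ref{spectrum} genuinely supply the spectral characterizations of commutativity and of the Jordan product that the linearity bootstrap requires. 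Everything downstream of those lemmas is a fairly mechanical polarization-and-continuity argument, but the delicate point is that the spectral characterizations must hold without the lattice-completeness and abundance of projections that von Neumann algebras provide.
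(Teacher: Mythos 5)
Your first step is essentially the paper's (the paper works with $s:=\phi(1_A)$ rather than a preimage of $1_B$, shows $\sigma(s)\subset\{1,-1\}$ from $\sigma(s^2)=\{1\}$, and uses Lemma~\ref{lemma2} plus surjectivity to see that $s_+ys_-=0$ for all $y\in B$, hence that $s$ is central; note that your ``$s:=\phi(e)$'' is literally $1_B$, so the symmetry you need to absorb lives on the other side). The genuine gap is in the middle of your plan: you mischaracterize what Lemmas~\ref{lemma2} and~\ref{spectrum} provide and then lean on characterizations that do not exist in the paper. Lemma~\ref{lemma2} only characterizes the elements $x$ with $\sigma(xy)\subset\mathbb{R}$ for all self-adjoint $y$ (namely $x_+Ax_-=0$), and Lemma~\ref{spectrum} is a pure uniqueness statement ($\sigma(ay)=\sigma(by)$ for all $y\in A_+^{-1}$ forces $a=b$). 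Neither gives a spectral characterization of commutativity, of projections, of order, or of the value of the Jordan product, so the ``additivity on commuting elements, then polarization'' bootstrap has no engine: there is no argument offered (and none available from these lemmas) for why $\phi$ should be additive even on a commuting pair, and the polarization identity cannot be used before additivity is established --- which is precisely the hard part of the whole problem.

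The paper's actual route, which your proposal omits entirely, is to normalize $T:=s\phi$, observe that $T$ restricts to a surjection of $A_+^{-1}$ onto $B_+^{-1}$ with $T(x^{-1})=T(x)^{-1}$, deduce $\sigma(x^{-1/2}yx^{-1/2})=\sigma(T(x)^{-1/2}T(y)T(x)^{-1/2})$ and hence that $T|_{A_+^{-1}}$ is a unital, positively homogeneous surjective isometry for the Thompson metric (positive homogeneity coming from Lemma~\ref{spectrum}), and then invoke the Hatori--Moln\'ar theorem on Thompson isometries to produce the Jordan $*$-isomorphism $J$ agreeing with $T$ on $A_+^{-1}$. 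Linearity is thus imported wholesale from that theorem rather than built by hand. Finally, Lemma~\ref{spectrum} is used once more to propagate $J=T$ from $A_+^{-1}$ to all of $A_{sa}$, via $\sigma(J(a)y)=\sigma(T(a)y)$ for all $y\in B_+^{-1}$. Without some substitute for the Thompson-isometry step, your outline cannot be completed as written.
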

Note that the problem we study is one of two problems posed by Moln\'ar in \cite[p.194]{molnar}. Another problem has an affirmative solution, too (see \cite[p.36]{molnar2} and \cite[Theorem 5.5]{HatoriOi}). 
Also note that multiplicatively spectrum-preserving mappings defined on the whole algebra (instead of the self-adjoint part) are studied in \cite{BMS} in the setting of unital semisimple Banach algebras.

\section{Results}

We present notations to be used throughout the paper. For a unital $C^{*}$-algebra $A$, $1$ stands for its unit. 
The symbols $A_{+}$ and $A_{+}^{-1}$ denote the set of all positive elements and the set of all positive invertible elements of $A$, respectively. 
For $x \in A_{sa}$, let $x=x_{+}-x_{-}$ denote the decomposition into its positive and negative parts. Note that $x_{+}x_{-}=x_{-}x_{+}=0$ holds. 
If $x \in A_+$, then $x^{1/2}\in A_+$ stands for its positive square root.


\begin{lemma}\label{lemma0}
Let $x \in A_+$ and $y\in A$. 
If $xy=0$, then $x^{1/2}y=0$. 
If $yx=0$, then $yx^{1/2}=0$. 
\end{lemma}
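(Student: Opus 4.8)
The plan is to prove the statement using the continuous functional calculus applied to $x \in A_+$. Since $x$ is positive, the abelian $C^*$-subalgebra $C^*(x,1)$ generated by $x$ and the unit is isometrically $*$-isomorphic to $C(\sigma(x))$, and under this identification $x$ corresponds to the identity function $t \mapsto t$ while $x^{1/2}$ corresponds to $t \mapsto t^{1/2}$ on $\sigma(x) \subseteq [0,\infty)$. The key idea is that $x^{1/2}$ can be approximated arbitrarily well in norm by polynomials in $x$ (with no constant term), so control over multiplication by $x$ yields control over multiplication by $x^{1/2}$.

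More precisely, for the first assertion I would proceed as follows. First I would observe that the hypothesis $xy=0$ immediately gives $x^n y = 0$ for every integer $n \geq 1$, and hence $p(x)y = 0$ for every polynomial $p$ with $p(0)=0$. Next I would invoke the Stone--Weierstrass theorem on the compact set $\sigma(x) \subseteq [0,\infty)$: the function $t \mapsto t^{1/2}$ is continuous and vanishes at $0$ (if $0 \in \sigma(x)$; if $0 \notin \sigma(x)$ the statement is trivial since $x$ is then invertible, forcing $y=0$), so it can be approximated uniformly on $\sigma(x)$ by polynomials $p_k$ with $p_k(0)=0$. By the functional calculus isomorphism, $\|x^{1/2} - p_k(x)\| = \|t^{1/2} - p_k(t)\|_\infty \to 0$. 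Then I would estimate
\[
\|x^{1/2}y\| = \|x^{1/2}y - p_k(x)y\| \leq \|x^{1/2} - p_k(x)\|\,\|y\| \to 0,
\]
using $p_k(x)y = 0$, which forces $x^{1/2}y = 0$.

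The second assertion, that $yx=0$ implies $yx^{1/2}=0$, follows by the symmetric argument: $yx=0$ gives $yx^n=0$ for all $n\geq 1$, hence $yp_k(x)=0$, and the same norm estimate $\|yx^{1/2}\| = \|y(x^{1/2}-p_k(x))\| \leq \|y\|\,\|x^{1/2}-p_k(x)\| \to 0$ yields the conclusion. Alternatively, one may deduce it from the first part by taking adjoints: since $x^{1/2}$ is self-adjoint, $yx=0$ gives $x y^* = (yx)^* = 0$, so the first part applied to $y^*$ in place of $y$ gives $x^{1/2}y^*=0$, and taking adjoints again gives $yx^{1/2}=0$.

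I do not anticipate a serious obstacle here; the only point requiring a moment's care is the handling of the constant term, namely ensuring that the approximating polynomials vanish at $0$ so that $p_k(x)y=0$ genuinely holds. This is guaranteed precisely because $t^{1/2}$ vanishes at $0$, so Stone--Weierstrass can be applied in the subalgebra of polynomials without constant term (equivalently, one approximates on $\sigma(x)$ and absorbs the constant error, which tends to $0$). The rest is a routine functional-calculus estimate.
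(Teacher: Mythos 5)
Your proposal is correct and follows exactly the paper's argument: the paper's one-line proof also rests on approximating $x^{1/2}$ in norm by polynomials $P_n$ with $P_n(0)=0$, so that $P_n(x)y=0$ passes to the limit. You have simply filled in the routine Stone--Weierstrass and norm-estimate details.
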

\begin{proof}
This follows from the basic fact that there is a sequence of polynomials $P_n$ satisfying $P_n(0)=0$ and $P_n(x)\to x^{1/2}$ as $n\to \infty$. 
\end{proof}

Throughout the paper, we use the following well-known fact several times. 
\begin{equation}\label{abba}
\sigma(ab)\setminus\{0\}=\sigma(ba)\setminus\{0\} \text{ whenever }a,b\in A. 
\end{equation}
This is called Jacobson's lemma.

The next lemma is an extension of \cite[Lemma 2.3]{molnar} (\cite[Lemma 22]{molnar2}). This might be of independent interest. 

\begin{lemma}\label{lemma2}
Let $x \in A_{sa}$. Then the following two conditions are equivalent. 
\begin{itemize}
\item $\sigma(xy) \subset \mathbb{R}$ for every $y \in A_{sa}$. 
\item $x_{+}yx_{-}=0$ for every $y \in A$.
\end{itemize}
\end{lemma}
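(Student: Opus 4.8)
The plan is to prove the two implications separately, after one common preliminary observation. I would first upgrade the ``off-diagonal'' condition $x_{+}yx_{-}=0$ $(y\in A)$ to the formally stronger $x_{+}^{1/2}yx_{-}^{1/2}=0$ $(y\in A)$. Writing $a=x_{+}^{1/2}$ and $b=x_{-}^{1/2}$, Lemma \ref{lemma0} applied to $x_{+}x_{-}=0$ gives $ab=ba=0$. Using the polynomials from the proof of Lemma \ref{lemma0}, one writes $a=\lim_n x_{+}R_n(x_{+})$ and $b=\lim_m R_m(x_{-})x_{-}$ for suitable polynomials $R_n,R_m$; substituting into $ayb$ exhibits $x_{+}^{1/2}yx_{-}^{1/2}$ as a limit of expressions of the form $x_{+}(\,\cdot\,)x_{-}=0$, so $ayb=0$ for all $y\in A$, and taking adjoints gives $bya=0$ as well (note $(ayb)^{*}=by^{*}a$ and $y^{*}$ ranges over $A$).

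For the implication $2\Rightarrow1$ the factorization is the key. Since $ab=ba=0$ we have $x=x_{+}-x_{-}=a^{2}-b^{2}=(a-b)(a+b)$. For $y\in A_{sa}$, Jacobson's lemma \eqref{abba} yields $\sigma(xy)\setminus\{0\}=\sigma((a-b)(a+b)y)\setminus\{0\}=\sigma((a+b)y(a-b))\setminus\{0\}$. Expanding and using $ayb=bya=0$ from the preliminary step, $(a+b)y(a-b)=aya-ayb+bya-byb=aya-byb$, which is self-adjoint. Hence its spectrum is real, so $\sigma(xy)\subset\mathbb{R}$ for every $y\in A_{sa}$.

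The implication $1\Rightarrow2$ is where I expect the real difficulty, and it is exactly the point at which Moln\'ar's original argument used spectral projections of $x$, which a general $C^{*}$-algebra lacks. My plan is to recover those projections by passing to an irreducible representation and then to transport the needed self-adjoint element back to $A$ via Kadison's transitivity theorem. Arguing contrapositively, suppose $x_{+}y_{0}x_{-}\neq0$ for some $y_{0}\in A$, and pick an irreducible (hence unital) representation $\pi\colon A\to B(H)$ with $\pi(x_{+}y_{0}x_{-})\neq0$. Since functional calculus commutes with $\pi$, the operator $X:=\pi(x)$ is self-adjoint with $X_{+}=\pi(x_{+})\neq0$ and $X_{-}=\pi(x_{-})\neq0$, so $\sigma(X)$ meets both $(0,\infty)$ and $(-\infty,0)$. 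Fixing $\lambda,\mu>0$ with $\lambda,-\mu\in\sigma(X)$ and a small $\delta>0$, I would take unit vectors $\eta,\xi$ in the spectral subspaces of $X$ for $[\lambda-\delta,\lambda+\delta]$ and $[-\mu-\delta,-\mu+\delta]$; these are orthogonal and satisfy $\|X\eta-\lambda\eta\|\le\delta$ and $\|X\xi+\mu\xi\|\le\delta$. By the self-adjoint form of Kadison's transitivity theorem there is $y=y^{*}\in A$ with $\pi(y)\eta=\xi$ and $\pi(y)\xi=\eta$.

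The contradiction then comes from a short approximate-eigenvector computation: for $\zeta=\lambda\eta+i\sqrt{\lambda\mu}\,\xi$ one checks $\|(\pi(xy)-i\sqrt{\lambda\mu})\zeta\|\le(\lambda+\sqrt{\lambda\mu})\delta$ while $\|\zeta\|=\sqrt{\lambda^{2}+\lambda\mu}$, so $\sigma(\pi(xy))$ has a point within distance $(\lambda+\sqrt{\lambda\mu})\delta/\sqrt{\lambda^{2}+\lambda\mu}$ of $i\sqrt{\lambda\mu}$. Choosing $\delta$ small enough that this distance is $<\sqrt{\lambda\mu}$ forces that spectral point to have nonzero imaginary part; since a unital representation cannot enlarge the spectrum, $\sigma(xy)\supseteq\sigma(\pi(xy))$ contains a non-real point, contradicting condition (1). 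The main obstacle, and the step I expect to require the most care, is precisely this last direction: ensuring that an irreducible representation supplies the two-sided spectral data simultaneously (guaranteed by $x_{+}y_{0}x_{-}\neq0$) and controlling the approximate-eigenvector error tightly enough that the resulting spectral value is provably off the real axis.
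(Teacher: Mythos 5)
Your preliminary step (upgrading $x_+yx_-=0$ to $x_+^{1/2}yx_-^{1/2}=0$) and the implication $2\Rightarrow1$ are correct and essentially coincide with the paper's argument: both rest on the factorization $x=(x_+^{1/2}\pm x_-^{1/2})(x_+^{1/2}\mp x_-^{1/2})$, Jacobson's lemma \eqref{abba}, and the vanishing of the cross terms.

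The implication $1\Rightarrow2$, however, has a genuine gap at its final step. You infer from $\lVert(\pi(xy)-i\sqrt{\lambda\mu})\zeta\rVert\le(\lambda+\sqrt{\lambda\mu})\,\delta$ that $\sigma(\pi(xy))$ contains a point within distance $(\lambda+\sqrt{\lambda\mu})\delta/\lVert\zeta\rVert$ of $i\sqrt{\lambda\mu}$. That inference is valid for normal operators, where $\lVert(T-z)^{-1}\rVert=\mathrm{dist}(z,\sigma(T))^{-1}$, but $\pi(xy)$ is a product of two self-adjoint operators and is not normal in general; for non-normal $T$ one only has the inequality $\lVert(T-z)^{-1}\rVert\ge\mathrm{dist}(z,\sigma(T))^{-1}$, which points the wrong way. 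Concretely, for $T_n=\bigl(\begin{smallmatrix}0&n\\0&0\end{smallmatrix}\bigr)$ and $\zeta_n=(n,1)$ one has $\lVert(T_n-1)\zeta_n\rVert/\lVert\zeta_n\rVert\to0$ while $\sigma(T_n)=\{0\}$ stays at distance $1$ from the approximate eigenvalue $1$; an arbitrarily good approximate eigenvector of a non-normal operator need not detect any nearby spectrum. So your estimate does not force $\sigma(\pi(xy))$, hence $\sigma(xy)$, off the real axis. (With \emph{exact} eigenvectors the span of $\eta,\xi$ would be $\pi(xy)$-invariant and $\pm i\sqrt{\lambda\mu}$ would be honest eigenvalues; but exact eigenvectors are precisely what a general $C^*$-algebra fails to supply, and this is the same obstruction that confined Moln\'ar's original proof to von Neumann algebras.) The paper sidesteps the issue by a purely algebraic device: for self-adjoint $y$ it sets $z=x_+^{1/2}yx_-^{1/2}+x_-^{1/2}yx_+^{1/2}\in A_{sa}$, uses \eqref{abba} to identify $\sigma(xz)$ up to $\{0\}$ with the spectrum of the anti-self-adjoint element $x_+yx_--x_-yx_+$, concludes from $\sigma(xz)\subset\mathbb{R}$ that this normal element is $0$, and then observes that $x_+yx_-$ is self-adjoint with square zero, hence zero. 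You would need either this trick or some other mechanism that certifies a genuinely non-real spectral value.
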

\begin{proof}
Firstly we assume that $x_{+}yx_{-}=0$ for every $y \in A$. 
Let $y \in A_{sa}$.
By Lemma \ref{lemma0}, we have $x_{+}-x_{-}=(x_{+}^{1/2}+x_{-}^{1/2})(x_{+}^{1/2}-x_{-}^{1/2})$ and $x_{+}^{1/2}yx_{-}^{1/2}=0$. 
We also have $x_{-}^{1/2}yx_{+}^{1/2}=(x_{+}^{1/2}yx_{-}^{1/2})^*=0$.
Thus we get 
\[
xy=(x_{+}-x_{-})y=(x_{+}^{1/2}+x_{-}^{1/2})(x_{+}^{1/2}-x_{-}^{1/2})y
\]
and 
\[
(x_{+}^{1/2}-x_{-}^{1/2})y(x_{+}^{1/2}+x_{-}^{1/2})=x_{+}^{1/2}yx_{+}^{1/2}-x_{-}^{1/2}yx_{-}^{1/2}\in A_{sa}.
\]
Combining these with \eqref{abba}, we get $\sigma(xy)\subset\mathbb{R}$. 

Conversely, assume that $\sigma(xy) \subset \mathbb{R}$ for every $y \in A_{sa}$.
Let $y \in A$. We prove $x_{+}yx_{-}=0$.
It suffices to consider the case $y \in A_{sa}$. 
In this case, we have $z:=x_{+}^{1/2}yx_{-}^{1/2}+x_{-}^{1/2}yx_{+}^{1/2}\in A_{sa}$. 
From $\sigma(xz)\subset \mathbb{R}$,
\[
xz=(x_{+}-x_{-})z=(x_{+}^{1/2}+x_{-}^{1/2})(x_{+}^{1/2}-x_{-}^{1/2})z,
\]
\[
(x_{+}^{1/2}-x_{-}^{1/2})z(x_{+}^{1/2}+x_{-}^{1/2})=x_{+}yx_{-}-x_{-}yx_{+}\in iA_{sa},
\]
and \eqref{abba}, we get $\sigma(x_{+}yx_{-}-x_{-}yx_{+})=\{0\}$ and $x_{+}yx_{-}=x_{-}yx_{+}$. 
Hence $x_{+}yx_{-}$ is self-adjoint and 
\[
(x_{+}yx_{-})^2=(x_{+}yx_{-})(x_{+}yx_{-})=x_{+}y\cdot 0\cdot yx_{-}=0,
\]
so we obtain $x_{+}yx_{-}=0$ as desired.
\end{proof}

Our proofs of Lemma \ref{lemma1} and Lemma \ref{spectrum} are similar to the proof of Theorem 2.2 in \cite{BB}, which shows that for any semisimple Banach algebra $X$ if $\sigma(ax)=\sigma(bx)$ for all elements $x \in X$ then $a=b$ holds.
\begin{lemma}\label{lemma1} Let $a, b \in A_{sa}$. Suppose that  $\sigma(ay)=\sigma(by)$ for any $y \in A_{+}^{-1}$. Let $x \in A_{sa}$ and let $\la$ be a real number with $|\la| > \lVert x\rVert$. Then $\la \in \sigma(x+a)$  if and only if $\la \in \sigma(x+b)$. 
\end{lemma}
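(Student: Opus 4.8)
The plan is to exploit the invertibility of $\la\cdot 1 - x$, which is guaranteed by $|\la| > \lVert x\rVert$, in order to convert the spectral membership $\la \in \sigma(x+a)$ into a statement about the spectrum of a product $ay$ with $y$ a \emph{self-adjoint invertible} element, which the hypothesis then controls.

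First I would set $c := \la\cdot 1 - x$. Since $x$ is self-adjoint we have $\sigma(x) \subseteq [-\lVert x\rVert, \lVert x\rVert]$, so the assumption $|\la| > \lVert x\rVert$ forces $\la \notin \sigma(x)$ and hence $c$ is invertible. More precisely, if $\la > \lVert x\rVert$ then $c \geq (\la - \lVert x\rVert)\cdot 1 > 0$, so both $c$ and $c^{-1}$ lie in $A_{+}^{-1}$; while if $\la < -\lVert x\rVert$ then $-c \geq (-\la - \lVert x\rVert)\cdot 1 > 0$, so $-c$ is positive invertible. This sign dichotomy is the structural fact driving the whole argument.

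Next I would rewrite the spectral condition. We have $\la \in \sigma(x+a)$ if and only if $c - a$ is not invertible; factoring $c - a = c(1 - c^{-1}a)$ with $c$ invertible, this holds if and only if $1 - c^{-1}a$ is not invertible, i.e.\ if and only if $1 \in \sigma(c^{-1}a)$. Since $1 \neq 0$, Jacobson's lemma \eqref{abba} gives $1 \in \sigma(c^{-1}a)$ iff $1 \in \sigma(ac^{-1})$. The identical computation applied to $b$ shows $\la \in \sigma(x+b)$ iff $1 \in \sigma(bc^{-1})$. Thus it suffices to prove the single equality $\sigma(ac^{-1}) = \sigma(bc^{-1})$.

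This is where the hypothesis enters, and it is also the one point demanding a little care. If $\la > \lVert x\rVert$, then $c^{-1} \in A_{+}^{-1}$ and the assumption directly yields $\sigma(ac^{-1}) = \sigma(bc^{-1})$. If $\la < -\lVert x\rVert$, then $c^{-1}$ is \emph{negative} invertible, so the hypothesis cannot be applied to $c^{-1}$ itself; instead I apply it to $y := -c^{-1} \in A_{+}^{-1}$, obtaining $\sigma(a(-c^{-1})) = \sigma(b(-c^{-1}))$, and then use $\sigma(-T) = -\sigma(T)$ to conclude $\sigma(ac^{-1}) = -\sigma(a(-c^{-1})) = -\sigma(b(-c^{-1})) = \sigma(bc^{-1})$. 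Combining the two cases gives $\sigma(ac^{-1}) = \sigma(bc^{-1})$, whence $1 \in \sigma(ac^{-1})$ iff $1 \in \sigma(bc^{-1})$, and therefore $\la \in \sigma(x+a)$ iff $\la \in \sigma(x+b)$. The main obstacle is thus not computational but this negative-$\la$ case: recognizing that the hypothesis is stated only for positive invertible $y$, so the reflection $\sigma(-T) = -\sigma(T)$ must be invoked to bridge to it; everything else is a routine invertibility rewriting.
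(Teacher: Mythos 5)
Your proposal is correct and follows essentially the same route as the paper: factor $\la-(x+a)=(\la-x)[1-(\la-x)^{-1}a]$, reduce to membership of $1$ in the spectrum of a product with the positive invertible $(\la-x)^{-1}$ (resp.\ $(x-\la)^{-1}$ when $\la<-\lVert x\rVert$), and invoke the hypothesis. Your explicit handling of the negative-$\la$ case via $\sigma(-T)=-\sigma(T)$ and the Jacobson-lemma flip from $c^{-1}a$ to $ac^{-1}$ just spells out details the paper leaves to the reader.
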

\begin{proof}
Suppose that $\la>\lVert x\rVert$. Then  
$\la-x \in A_{+}^{-1}$. We have $\la-(x+a)=(\la-x)[1-(\la-x)^{-1}a]$. This implies that $\la \in \sigma(x+a) \iff 1 \in \sigma((\la-x)^{-1}a) \iff 1 \in \sigma((\la-x)^{-1}b) \iff \la \in \sigma(x+b)$. 
Suppose that $-\la > \lVert x\rVert$. Then 
$-\la+x \in A_{+}^{-1}$. Thus the conclusion follows from a similar procedure as described above.
\end{proof}

\begin{corollary}\label{lemma5} Let $a, b \in A_{sa}$. Suppose that  $\sigma(ay)=\sigma(by)$ for any $y \in A_{+}^{-1}$. Let $x \in A_{sa}$. Then either $\max\{\lVert x+a\rVert, \lVert x+b\rVert\}\leq \lVert x\rVert$ or $\lVert x+a\rVert=\lVert x+b\rVert$ holds.
\end{corollary}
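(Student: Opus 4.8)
The plan is to derive the statement directly from Lemma \ref{lemma1}, combined with the elementary fact that the norm of a self-adjoint element equals its spectral radius and is attained as a point of the (compact, real) spectrum. So the whole corollary should reduce to a short dichotomy argument with no new analytic input.

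First I would set $\alpha = \lVert x+a\rVert$, $\beta = \lVert x+b\rVert$, and $r = \lVert x\rVert$, and observe that the first alternative in the statement is precisely $\max\{\alpha,\beta\} \le r$. Thus it suffices to show: if $\max\{\alpha,\beta\} > r$, then $\alpha = \beta$. Since the hypothesis $\sigma(ay)=\sigma(by)$ is symmetric in $a$ and $b$, I may assume without loss of generality that $\alpha = \max\{\alpha,\beta\}$, so that $\alpha > r$.

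The key step exploits self-adjointness of $x+a$. Its spectrum is a compact subset of $\mathbb{R}$, and $\lVert x+a\rVert = \max\{|\la| : \la \in \sigma(x+a)\}$, so at least one of $\alpha, -\alpha$ actually lies in $\sigma(x+a)$. I would fix such a real $\la \in \sigma(x+a)$ with $|\la| = \alpha > r = \lVert x\rVert$. Now Lemma \ref{lemma1} applies to this very $\la$ and gives $\la \in \sigma(x+b)$, whence $\beta = \lVert x+b\rVert \ge |\la| = \alpha$. Together with $\alpha \ge \beta$, this forces $\alpha = \beta$.

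I do not expect any serious obstacle once Lemma \ref{lemma1} is available; the argument is essentially bookkeeping. The only points requiring slight care are the reduction by the symmetry between $a$ and $b$, and the observation that the extremal value $\pm\lVert x+a\rVert$ is genuinely attained \emph{in} the spectrum (using compactness of the spectrum of a self-adjoint element), which is exactly what lets me feed a concrete $\la$ with $|\la|>\lVert x\rVert$ into Lemma \ref{lemma1}.
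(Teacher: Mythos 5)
Your proposal is correct and is exactly the intended derivation: the paper states the corollary without proof precisely because it follows from Lemma \ref{lemma1} via the fact that for a self-adjoint element the norm is the spectral radius and is attained at some real point of the (compact, nonempty) spectrum. The dichotomy and the symmetry reduction are handled properly, so nothing is missing.
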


The next lemma extends \cite[Lemma 2.4]{molnar} (\cite[Lemma 23]{molnar2}).

\begin{lemma}\label{spectrum}
Let $a, b \in A_{sa}$. If $\sigma(ay)=\sigma(by)$ for any $y \in A_{+}^{-1}$, then we have $a=b$.
\end{lemma}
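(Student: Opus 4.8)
The plan is to deduce $a=b$ from the two inequalities $a\le b$ and $b\le a$, so by symmetry it suffices to prove $\max\sigma(a-b)\le 0$. Before starting I would record the cheap consequences of the hypothesis: taking $y=1\in A_+^{-1}$ gives $\sigma(a)=\sigma(b)$, so that $\|a\|=\|b\|$ and the two elements share the extreme points of their spectra; and if $a=0$ then $\sigma(by)=\{0\}$ for every $y\in A_+^{-1}$, whence $\sigma(b)=\{0\}$ and $b=0$. Thus I may assume $\|a\|=\|b\|>0$.

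Next I would feed Corollary~\ref{lemma5} well-chosen test elements. Its content is the dichotomy that, for each $x\in A_{sa}$, either $\|x+a\|=\|x+b\|$, or else both $\|x+a\|,\|x+b\|\le\|x\|$. Replacing $x$ by $\lambda x$ with $\lambda\downarrow 0$ rules out the second alternative once $\|x\|$ is small relative to $\|a\|$, since $\|\lambda x+a\|\ge\|a\|-\lambda\|x\|>\lambda\|x\|$; hence $\|u+a\|=\|u+b\|$ for all $u$ in a neighbourhood of $0$. The subtlety I would stress is that such local agreement of the norm functions only detects the top of the spectrum and does not by itself force $a=b$; to separate $a$ and $b$ one must also apply the Corollary to elements of large norm that expose the part of the spectrum where $a$ and $b$ could disagree. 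To pin down that part I would pass to the equivalent multiplicative form of the hypothesis: since $ay$ is similar to the self-adjoint element $y^{1/2}ay^{1/2}$, we have $\sigma(ay)=\sigma(y^{1/2}ay^{1/2})\subset\mathbb{R}$, so the hypothesis reads $\sigma(y^{1/2}ay^{1/2})=\sigma(y^{1/2}by^{1/2})$ for every $y\in A_+^{-1}$, yielding in particular matching maxima, minima and norms of these self-adjoint sandwiches.

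The engine is then an amplification step. Assuming $\max\sigma(a-b)=\delta>0$ for contradiction, I would fix a state $\varphi$ with $\varphi(a-b)=\delta$ and exploit that $\varphi(h(a-b))=h(\delta)$ together with the Cauchy--Schwarz vanishing $\varphi\big((\delta-(a-b))^{1/2}\,\cdot\,\big)=0$. The idea is to weight the problem by a positive invertible $y$ manufactured from continuous functional calculus that behaves like an approximate spectral projection onto the region where $a$ exceeds $b$ (weights of the form $(1+R(a-b)_+)^{\pm 1}$), and to send the amplification parameter $R\to\infty$ so that either the dichotomy of Corollary~\ref{lemma5} or the forced equality $\max\sigma(y^{1/2}ay^{1/2})=\max\sigma(y^{1/2}by^{1/2})$ collapses into a strict inequality. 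I expect the main obstacle to be precisely this localization: in a general $C^*$-algebra the spectral projections underlying Moln\'ar's von Neumann algebra argument are unavailable, so the entire difficulty is to realise the cut-off by norm-continuous functional calculus, to keep control of the non-commuting cross terms created by sandwiching $a$ and $b$ by functions of $a-b$, and to justify the limit. Once a single weight produces $\max\sigma(y^{1/2}ay^{1/2})\neq\max\sigma(y^{1/2}by^{1/2})$, the contradiction yields $\max\sigma(a-b)\le 0$; the symmetric choice gives $\max\sigma(b-a)\le 0$, and therefore $a=b$.
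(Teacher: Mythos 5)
Your preliminary observations are all correct (taking $y=1$ gives $\sigma(a)=\sigma(b)$; the similarity $\sigma(ay)=\sigma(y^{1/2}ay^{1/2})$; the fact that the second alternative of Corollary~\ref{lemma5} cannot occur for $x$ of small norm), but the proof stops exactly where it would have to begin. The entire argument rests on the ``amplification step'': you assume $\max\sigma(a-b)=\delta>0$, propose weights $y=(1+R(a-b)_+)^{\pm1}$, and assert that as $R\to\infty$ either the dichotomy of Corollary~\ref{lemma5} or the equality $\max\sigma(y^{1/2}ay^{1/2})=\max\sigma(y^{1/2}by^{1/2})$ must ``collapse into a strict inequality.'' Nothing in the proposal shows this. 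You yourself list the obstructions --- the absence of spectral projections, the non-commuting cross terms produced by sandwiching $a$ and $b$ with functions of $a-b$, the justification of the limit --- and none of them is resolved. As written, there is no estimate showing that any single weight $y$ produces $\max\sigma(y^{1/2}ay^{1/2})\neq\max\sigma(y^{1/2}by^{1/2})$; indeed it is not even clear that the quantity $\max\sigma\bigl(y^{1/2}(a-b)y^{1/2}\bigr)$, which is what the weight actually amplifies, controls the difference of the two maxima you want to separate. So this is a genuine gap, not a routine verification left to the reader.

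The paper avoids this localization problem entirely, and in a way worth internalizing: following Braatvedt--Brits, it \emph{iterates} Corollary~\ref{lemma5} on the affine combinations $x=k(b-a)$ and $x=ka-(k+1)b$, and shows by induction that both alternatives of the dichotomy lead to $\lVert (k+1)b-ka\rVert\le\lVert b\rVert$ and then $\lVert (k+1)(b-a)\rVert\le\lVert b\rVert$. Since $\lVert n(b-a)\rVert\le\lVert b\rVert$ for every $n$, one gets $a=b$ with no functional calculus, no states, and no passage to a limit of weights. If you want to salvage your outline, the honest advice is to abandon the amplification idea and instead look for test elements $x$ built linearly from $a$ and $b$ for which \emph{both} branches of Corollary~\ref{lemma5} yield the same useful norm bound; that is the actual content of the paper's proof.
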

\begin{proof}
We prove by induction that 
\begin{equation}\label{induction}
\lVert nb-(n-1)a\rVert \leq \lVert b\rVert \text{ and }\lVert n(b-a)\rVert\le \lVert b\rVert 
\end{equation}
for $n \ge 1$.
Applying Corollary \ref{lemma5} with $x=-b \in A_{sa}$, we get either $\lVert -b+a\rVert\leq \lVert -b\rVert$ or $\lVert -b+a\rVert=\lVert -b+b\rVert=0$. In both cases, we get  $\lVert b-a\rVert \leq \lVert b\rVert$.  
Thus we get \eqref{induction} when $n=1$. 

Let $k\geq1$ and suppose that \eqref{induction} holds with $n=k$. Applying Corollary \ref{lemma5} with $x=k(b-a) \in A_{sa}$, we get either
$\lVert k(b-a)+b\rVert\leq \lVert k(b-a)\rVert$ or $\lVert k(b-a)+a\rVert=\lVert k(b-a)+b\rVert$.
Therefore, in both cases, the assumption implies
\begin{equation}\label{induction1}
\lVert (k+1)b-ka\rVert \le \lVert b\rVert.
\end{equation} 
Applying Corollary \ref{lemma5} with  $x=ka-(k+1)b \in A_{sa}$, we have either
$\lVert ka-(k+1)b+a\rVert\leq \lVert ka-(k+1)b\rVert$ or $\lVert ka-(k+1)b+a\rVert=\lVert ka-(k+1)b+b\rVert$.
In both cases, the assumption together with \eqref{induction1} shows 
\[
\lVert (k+1)(b-a)\rVert  \le \lVert b\rVert.
\]

Thus we have obtained the equation $\lVert n(b-a)\rVert \le \lVert b\rVert$ for any $n \geq 1$. This implies  $a=b$.
\end{proof}

Having obtained Lemma \ref{lemma2} and Lemma \ref{spectrum}, we are ready to prove the main theorem of this paper according to the observations of \cite[p.35]{molnar2}.    
Although the proof is very similar to that of \cite[Theorem 2.2]{molnar}, we present it for the sake of readability.
\begin{proof}[Proof of Theorem \ref{main}]
Let $s=\phi(1)$. Then $s \in B_{sa}$. We have $\sigma(s^2)
=\sigma(1)=\{1\}$, thus we obtain that $\sigma(s) \subset \{1,-1\}$, so $s$ is a symmetry. As $\phi$ is a surjection, for any $y \in B_{sa}$ there is  $x \in A_{sa}$ such that $\phi(x)=y$. We have $\sigma(sy)=\sigma(\phi(1)\phi(x))=\sigma(x) \subset \mathbb{R}$. By Lemma \ref{lemma2}, we obtain $s_{+}ys_{-}=0$ for any $y \in B$. For any $y \in B_{sa}$, we get $s_{+}ys_{-}=0$ and $s_{-}ys_{+} = (s_{+}ys_{-})^*=0$, thus 
\[
sy=(s_{+}-s_{-})y(s_{+}+s_{-})=s_{+}ys_{+}-s_{-}ys_{-} \in B_{sa}.
\] 
It follows that $sy=(sy)^{*}=ys$. Hence $s$ is central. 

We define a map $T\colon A_{sa} \to B_{sa}$ by $T(x)=s\phi(x)$ for any $x \in A_{sa}$. 
For any $y \in B_{sa}$, as $sy \in B_{sa}$ and $\phi$ is surjective, there is $x \in A_{sa}$ such that $\phi(x)=sy$. Thus $T(x)=ssy=y$. 
Therefore, $T\colon A_{sa} \to B_{sa}$ is a surjection. We have  
\begin{equation}\label{multiplicative}
\sigma(T(x)T(y))=\sigma(s\phi(x)s\phi(y))=\sigma(\phi(x)\phi(y))=\sigma(xy)
\end{equation}
for any $x,y \in A_{sa}$.  
By taking $y=1$, we get $\sigma(T(x))=\sigma(x)$ for any $x \in A_{sa}$. 
It follows that $x\in A_{+}^{-1} \iff T(x)\in B_{+}^{-1}$.
Therefore, $T|_{A_{+}^{-1}}\colon A_{+}^{-1} \to B_{+}^{-1}$ is a surjective map which satisfies (\ref{multiplicative}). 

For any $x \in  A_{+}^{-1}$ we have 
\[
\{1\}=\sigma(xx^{-1})\stackrel{\eqref{multiplicative}}{=}\sigma(T(x)T(x^{-1}))\stackrel{\eqref{abba}}{=}\sigma(T(x)^{1/2}T(x^{-1})T(x)^{1/2}).
\]
Thus we obtain $T(x)^{1/2}T(x^{-1})T(x)^{1/2}=1$ and \begin{equation}\label{-1}
T(x)^{-1}=T(x^{-1}).
\end{equation} 
It follows that
\[
\begin{split}
\sigma(x^{-{1/2}}yx^{-{1/2}}) \stackrel{\eqref{abba}}{=} \sigma(yx^{-1}) &\stackrel{\eqref{multiplicative}}{=} \sigma(T(y)T(x^{-1}))\\
&\stackrel{\eqref{-1}}{=} \sigma(T(y)T(x)^{-1})\stackrel{\eqref{abba}}{=}\sigma(T(x)^{-{1/2}}T(y)T(x)^{-{1/2}})
\end{split}
\]
for any pair $x,y \in A_{+}^{-1}$.
Combining this with a fact that the Thompson metric $d_{T}$ on $A_{+}^{-1}$ satisfies $d_{T}(x,y)=\|\log(x^{-{1/2}}yx^{-{1/2}})\|$ for any $x,y \in A_{+}^{-1}$, we have $T|_{A_{+}^{-1}}\colon A_{+}^{-1} \to B_{+}^{-1}$ is a surjective Thompson isometry with $T(1)=1$. For any $t>0$ and $x,y \in A_{+}^{-1}$, we see that $\sigma(T(x)T(y))=\sigma(xy)$ and $\sigma(txy)=\sigma(T(tx)T(y))$ by \eqref{multiplicative}, hence
\[
\sigma(tT(x)T(y))=\sigma(txy)=\sigma(T(tx)T(y)).
\]
Thus, Lemma \ref{spectrum} shows that $tT(x)=T(tx)$ for any $x \in  A_{+}^{-1}$. In other words, $T|_{A_{+}^{-1}}$ is positive homogeneous. 
By applying \cite[Theorem 9]{HatoriMolnar} (\cite[Theorem 8]{molnar2}) with $T|_{A_{+}^{-1}}$, we see that there is a Jordan $*$-isomorphism $J\colon A \to B$ such that $T=J$ on $A_{+}^{-1}$. Fix $a \in A_{sa}$. For any $y \in B_{+}^{-1}$, there is $x \in A_{+}^{-1}$ such that $T(x)=J(x)=y$. We have 
\[
\sigma(J(a)y)=\sigma(J(a)J(x))\stackrel{\eqref{jajb}}{=}\sigma(ax)\stackrel{\eqref{multiplicative}}{=}\sigma(T(a)T(x))=\sigma(T(a)y).
\]
Since $J(a), T(a) \in B_{sa}$,  Lemma \ref{spectrum} implies that  $J(a)=T(a)$. Thus $\phi(a)=sJ(a)$ for any $a \in A_{sa}$. 
\end{proof}

\subsection*{Acknowledgments}
The first and the second authors were supported by JSPS KAKENHI Grant Numbers 22K13934 and 21K13804, respectively. 
The authors appreciate Professors Masaki Izumi and Narutaka Ozawa for giving them an opportunity to visit Kyoto, where this joint work began.


\end{document}